\documentclass[11pt]{article}
\usepackage{epsfig}
\usepackage{graphicx}
\usepackage{amssymb,amsmath}
\usepackage{caption}
\usepackage{array}
\textwidth = 15 cm \textheight = 22 cm \oddsidemargin = 0 cm
\evensidemargin = 0 cm \topmargin = -1 cm \topmargin = 0 cm
\parskip = 2.5 mm
\newtheorem{ghazie}{{\bf Theorem}}[section]
\newtheorem{conj}[ghazie]{\bf Conjecture}
\newtheorem{lemma}[ghazie]{\bf Lemma}
\newtheorem{exm}[ghazie]{\bf Example}
\newtheorem{cor}[ghazie]{\bf Corollary}
\newtheorem{pro}[ghazie]{\bf Proposition}
\newcommand{\rem}[1]{\hspace*{-.0cm}\textbf{Remark.}\textit{#1}}
               
\newtheorem{preproof}{{\bf Proof.}}

\newenvironment{proof}[1]{\begin{preproof}{\rm
               #1}\hfill{$\rule{2mm}{2mm}$}}{\end{preproof}}


\parindent=0cm
\begin{document}
\title{On the 1-2-3-conjecture}
\author{Akbar Davoodi\footnote{a.davoodi@math.iut.ac.ir} \ and Behnaz Omoomi\footnote{bomoomi@cc.iut.ac.ir}\\[.5cm]
Department of Mathematical Sciences\\
Isfahan University of Technology\\
84156-83111, Isfahan, Iran}
\date{}
\maketitle
\begin{abstract}
A $k$-edge-weighting of a graph $G$ is a function $w:E(G)\rightarrow\{1,2,\ldots,k\}$. An edge-weighting naturally induces a vertex coloring $c$, where for every $v\in V(G)$, $c(v)=\sum_{e\sim v}w(e)$. If the induced coloring $c$ is a proper vertex coloring, then $w$ is called a vertex-coloring $k$-edge weighting (VC$k$-EW). 
Karo\'{n}ski et al. (J. Combin. Theory Ser. B 91 (2004) 151--157) conjectured that every graph admits a VC$3$-EW. This conjecture is known as $1$-$2$-$3$-conjecture.
In this paper, first, we study the vertex-coloring edge-weighting of the cartesian product of graphs. Among some results, we prove that the $1$-$2$-$3$-conjecture holds for some infinite classes of graphs. Moreover, we explore some properties of a graph to admit a VC$2$-EW.
\end{abstract}

\section{\hspace*{-.6cm}. Introduction}
In this paper, we consider finite and simple graphs.
A \textit{$r$-vertex coloring} $c$ of $G$ is a function $c:V(G)\rightarrow\{1,2,\ldots ,r\}$. The coloring $c$ is called a \textit{proper vertex coloring} if for every two adjacent vertices $u$ and $v$, $c(u)\neq c(v)$. A graph $G$ is \textit{$r$-colorable} if $G$ has a proper $r$-vertex coloring.

A \textit{$k$-edge-weighting} of a graph $G$ is a function $w:E(G)\rightarrow\{1,2,\ldots,k\}$. An edge-weighting naturally induces a vertex coloring $c$, where for every $v\in~V(G)$, $c(v)=\sum_{e\sim v}w(e)$. The notion $e\sim v$, shows that $e$ is an edge incident to vertex $v$. If the induced coloring $c$ is a proper vertex coloring, then $w$ is called a \textit{vertex-coloring $k$-edge weighting} (VC$k$-EW). The minimum $k$ which $G$ has a VC$k$-EW is denoted by $\mu(G)$.

Obviously for a graph $G$ with components $G_{1}, G_{2}, \ldots, G_{t}$; $\mu(G)=\max\{\mu(G_{i})\,|\, 1\leq i\leq t\}$.
Also, note that the vertex-coloring $k$-edge-weighting is defined for graphs with $\Delta\geq 2$, where $\Delta$ is the maximum degree of vertices in $G$. Thus, we consider connected graphs with at least three vertices.

Karo\'{n}ski et al. in \cite{123} introduced the concept of vertex-coloring $k$-edge weighting and they proposed the following conjecture. 

\begin{conj}{\rm\cite{123}} {\rm($1$-$2$-$3$-conjecture)}
For every connected graph $G$ with at least three vertices, $\mu(G)\leq 3$.
\end{conj}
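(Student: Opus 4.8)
The statement in question is the $1$-$2$-$3$-conjecture itself, which is (at the time of writing) open; so what I can realistically offer is the line of attack I would mount, together with an honest account of how far each part of it is known to reach. The plan is to split the problem into two regimes and use different machinery on each.

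First, the low-chromatic regime. If $G$ is $3$-colorable, fix a proper $3$-coloring and linearly order the three color classes. I would then build the weighting greedily, handling the edges incident to a vertex $v$ only once all of $v$'s neighbors in earlier color classes have had their induced sums fixed. The point is that at that moment $v$ has a controlled number of ``forbidden'' sums coming from earlier neighbors, while the still-free edges at $v$ give enough arithmetic freedom with weights from $\{1,2,3\}$ to dodge them; pushing this through in the order of the color classes shows $\mu(G)\le 3$ for every $3$-chromatic (in particular every bipartite) graph. This is essentially the original argument of Karo\'nski, \L uczak and Thomason, and I would reproduce it carefully both because it settles the conjecture for an infinite family and because it is the prototype for the general strategy.

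Second, the general regime, where I would use the algorithmic/greedy route. Order the vertices $v_1,\dots,v_n$ and process them in turn, maintaining after step $i$ the invariant that all edges incident to $v_1,\dots,v_i$ have been weighted, that the induced sums are already correctly separated on all edges among $v_1,\dots,v_i$, and --- crucially --- that for every still-unprocessed vertex there remain at least \emph{two} admissible target values for its eventual sum, differing by a bounded amount. The two-value slack is what lets a vertex always escape its earlier-processed neighbors when its turn comes. Bookkeeping how large the weight set must be to preserve this invariant yields an absolute constant bound; the Kalkowski--Karo\'nski--Pfender refinement of exactly this scheme gives $\mu(G)\le 5$, and a variant that tracks the multiset of incident weights rather than their sum buys still more room. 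So the conjecture is known ``up to $2$''.

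The main obstacle, and the reason the conjecture remains open, is precisely closing the gap from $5$ (or, in the multiset version, from the analogous small constant) down to $3$. The greedy invariant is wasteful: guaranteeing two escape values at \emph{every} vertex costs a factor that no known local argument trims to the optimum. The complementary global approach --- applying the Combinatorial Nullstellensatz to a carefully chosen polynomial whose nonvanishing would force the existence of a good weighting --- reduces matters to a permanent-type coefficient being nonzero over the relevant ring, but this is only verified for weight sets that are still too large. A proof would seem to require either a smarter amortization in the greedy process (borrowing the second value only at genuine bottleneck vertices and repaying it elsewhere) or a new algebraic identity establishing the needed nonvanishing already at $k=3$; I do not currently see how to carry out either, which is exactly why the statement is still a conjecture rather than a theorem.
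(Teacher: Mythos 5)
This statement is the $1$-$2$-$3$-conjecture itself: the paper states it as an open conjecture of Karo\'nski, \L{}uczak and Thomason and offers no proof of it, only verifying it for special classes such as cartesian products of paths, cycles and complete graphs. You correctly identified it as open rather than manufacturing a bogus argument, and your account of the known partial results --- the $3$-colorable case and the Kalkowski--Karo\'nski--Pfender bound $\mu(G)\le 5$ --- is accurate and consistent with what the paper cites, so there is nothing to compare and no gap to report.
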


Addario-Berry et al. \cite{vc 30-ew} showed that for every connected graph with at least three vertices, $\mu(G)\leq 30$. Then Addario-Berry et al. \cite{vc 16-ew} improved the bound to 16. Later, Wang and Yu \cite{vc 13-ew} improved this bound to 13. Recently, Kalkowski et al. \cite{vc 5-ew} showed that for every connected graph $G$ with at least three vertices, $\mu(G)\leq 5$.

It is proved that for every $3$-colorable graph $G$, $\mu(G)\leq3$ \cite{123}. In general the similar fact for $2$-colorable graphs is not true. The only known families of bipartite graphs with $\mu(G)=3$ are cycles $C_{4k+2}$ and theta graph $\theta(1,4k_2+1,4k_3+1,\ldots,4k_r+1)$, where $\theta(1,4k_2+1,4k_3+1,\ldots,4k_r+1)$ is a graph obtained by $r$ simple paths of length $4k_i+1$ with common end vertices \cite{chang}. In the following two theorems some sufficient conditions for bipartite graphs to admit a VC$2$-EW are presented.

\begin{ghazie}{\rm\cite{chang}}\label{delta=1}
If $G\ncong K_{2}$ is a connected bipartite graph with one of the following conditions, then $\mu(G)\leq 2$.
\begin{itemize}
\item
$\delta(G)=1$, where $\delta(G)$ is the minimum degree of $G$.
\item
$G$ has a part with even number of vertices.
\end{itemize}
\end{ghazie}
The notion $N[v]$ denotes the $N(v)\cup \{v\}=\{u\,|\,u\in V(G), uv\in E(G)\}\cup\{v\}$.

\begin{ghazie}{\rm\cite{europ}}\label{G-N[v]}
Let $G\ncong K_{2}$ be a connected bipartite graph. If one of the following conditions holds, then $\mu(G)\leq2$.
\begin{itemize}
\item There exists a vertex $v$ such that $\deg(v)\notin\{\deg(u) | u\in N(v)\}$ and $G-N[v]$ is connected.
\item There exists a vertex $v$ of degree $\delta(G)$ such that
$\deg(v)\notin\{\deg(u) | u\in N(v)\}$ and $G-v$ is connected.
\item $G$ is $3$-connected.
\end{itemize}
\end{ghazie}
\begin{ghazie}{\rm\cite{chang}}\label{P_n & C_n}
Let $P_n$, $C_n$ and $K_n$, $n\geq3$, denote the path, cycle and complete graph with $n$ vertices, respectively. Then
\begin{equation*}
\mu(P_n)=
 \left\{
    \begin{array}{ll}
	1 & n=3 \\ 
	2 & n\geq 4
    \end{array}
 \right.
,\quad
\mu(C_n)=
 \left\{
    \begin{array}{ll}
	2 & n\equiv 0\hspace*{-.35cm}\pmod4 \\ 
	3 & \text{otherwise}
    \end{array}
 \right.
 \text{,and }\ 
 \mu(K_n)=3.
\end{equation*}
\end{ghazie}
The structure of this paper is as follows. In Section \ref{sec: cartesian}, we consider the $1$-$2$-$3$-conjecture for the cartesian product of graphs. First, we prove that the conjecture holds for the cartesian product of some well known families of graphs. Moreover, we prove that the cartesian product of connected bipartite graphs admits a VC$2$-EW. Then, we determine $\mu(G)$ for some well known families of graphs. In \cite{vc 16-ew} Addario-Berry et al. proved that almost all graphs admit a VC$2$-EW. The problem of classifying graphs which admit a VC$2$-EW is an open problem.
In Section \ref{sec: VC2-EW}, first, we present some sufficient conditions for a graph to admit a VC$2$-EW. Then, in Section \ref{sec: VCEW of bipartite} following to exploring more bipartite graphs with $\mu(G)=3$, we consider a generalization of theta graphs which are called generalized polygon trees and determine $\mu(G)$ for some bipartite generalized polygon trees.

\section{\hspace*{-.6cm}. 1-2-3-conjecture for the cartesian product of graphs}\label{sec: cartesian}
The \textit{cartesian product} of two graphs $G$ and $H$, written $G\Box H$, is the graph with vertex set $V(G)\times V(H)$ specified by putting $(u,v)$ adjacent to $(u', v')$ if and only if $u=u'$ and $vv'\in E(H)$, or  $v=v'$ and $uu'\in E(G)$. 

In this section, we prove that if the $1$-$2$-$3$-conjecture holds for two graphs $G$ and $H$, then it also holds for $G\Box H$. Moreover, we prove that if $G$ and $H$ are bipartite graphs, then $\mu(G\Box H)\leq 2$.
\begin{ghazie}\label{max}
For every two graphs $G$ and $H$, $\mu(G\Box H)\leq \max\{\mu(G),\mu(H)\}$.
\end{ghazie}
\begin{proof}{
Let $k:=\max\{\mu(G),\mu(H)\}$ and $w_{1}:E(G)\rightarrow \{1,2,\ldots,k\}$, $w_{2}:E(H)\rightarrow \{1,2,\ldots,k\}$ be VC$k$-EW for graphs $G$ and $H$, respectively. We define $w:E(G\Box H)\rightarrow \{1,2,\ldots,k\}$, $w((u,v)(u',v))=w_{1}(uu')$, and $w((u,v)(u,v'))=w_{2}(vv')$, where
$u,u'\in V(G)$, $v,v'\in V(H)$. It is easy to see that $w$ is a VC$k$-EW for $G\Box H$
}\end{proof}
%

By Theorem \ref{P_n & C_n} and Theorem \ref{max}, we conclude that the $1$-$2$-$3$-conjecture is true for some well known families of graphs.  
\begin{cor}
The $1$-$2$-$3$-conjecture is true for the cartesian product of complete graphs, cycles and paths. For instance the following graphs satisfy the $1$-$2$-$3$-conjecture.\\
$\bullet\; C_{n}\Box P_{m}$\hspace{2.75cm}
$\bullet\; K_{n}\Box P_{m}$\hspace{2.6cm}
$\bullet\; K_{n}\Box C_{m}$\hspace{2cm}
\\
$\bullet\; C_{n_1}\Box C_{n_2}\Box\cdots\Box C_{n_k}$\hspace{.7cm}
$\bullet\; P_{n_1}\Box P_{n_2}\Box\cdots\Box P_{n_k}$\hspace{.7cm}
$\bullet\; K_{n_1}\Box K_{n_2}\Box\cdots \Box K_{n_k}$
\end{cor}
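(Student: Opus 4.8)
The plan is to read the corollary as a direct consequence of the two results already established: Theorem~\ref{P_n & C_n}, which gives $\mu(P_n)\le 2$, $\mu(C_n)\le 3$ and $\mu(K_n)=3$ for all admissible $n$, and Theorem~\ref{max}, which says $\mu(G\Box H)\le\max\{\mu(G),\mu(H)\}$. So the entire content of the statement is carried by these two theorems, and the proof is essentially an induction that promotes the binary bound of Theorem~\ref{max} to an arbitrary number of factors.

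Concretely, I would first note that the cartesian product is associative, so a product $G_1\Box G_2\Box\cdots\Box G_k$ of graphs each belonging to $\{P_{n},C_{n},K_{n}\}$ is unambiguously defined, and then induct on $k$. The base case $k=1$ is exactly Theorem~\ref{P_n & C_n}. For the inductive step, write $G_1\Box\cdots\Box G_k=(G_1\Box\cdots\Box G_{k-1})\Box G_k$ and apply Theorem~\ref{max} to get $\mu(G_1\Box\cdots\Box G_k)\le\max\{\mu(G_1\Box\cdots\Box G_{k-1}),\mu(G_k)\}$; the first argument of the maximum is at most $3$ by the induction hypothesis and the second is at most $3$ by Theorem~\ref{P_n & C_n}, so the product admits a VC$3$-EW. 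Specializing $k=2$ recovers the first three bullets, e.g.\ $\mu(C_n\Box P_m)\le\max\{3,2\}=3$, $\mu(K_n\Box P_m)\le\max\{3,2\}=3$, $\mu(K_n\Box C_m)\le\max\{3,3\}=3$; iterating gives the three multi-factor families $C_{n_1}\Box\cdots\Box C_{n_k}$, $P_{n_1}\Box\cdots\Box P_{n_k}$ and $K_{n_1}\Box\cdots\Box K_{n_k}$.

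The only subtlety worth flagging is that $\mu$ is defined only for graphs with $\Delta\ge 2$, so a factor equal to $K_2$ (that is, $P_2$) must be excluded when invoking Theorem~\ref{max}; reading each path factor with at least three vertices makes every factor already carry a VC$k$-EW with $k\le 3$, and then the explicit weighting in the proof of Theorem~\ref{max} transfers it to the product. I do not expect any genuine obstacle here: once associativity is noted, the argument is a one-line induction, and the ``hard part'' — actually bounding $\mu$ on the building blocks and on a single product — has already been done in Theorems~\ref{P_n & C_n} and~\ref{max}.
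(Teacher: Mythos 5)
Your proposal is correct and matches the paper's approach exactly: the paper derives this corollary directly from Theorem~\ref{P_n & C_n} and Theorem~\ref{max}, with the multi-factor cases handled by the same implicit iteration (associativity plus repeated application of Theorem~\ref{max}) that you spell out as an induction. Your remark about excluding $K_2$ factors is a reasonable extra precaution that the paper does not bother to state.
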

We need the following lemma to prove our main theorem in this section. In what follows $n(G)$ and $\kappa(G)$ denote the order and vertex connectivity of $G$, respectively.
\begin{lemma}\label{kapa} {\rm\cite{simon}}
For every two graphs $G$ and $H$, $$\kappa(G\Box H)=\min\{\delta(G\Box H),\kappa(H).n(G),\kappa(G).n(H)\}.$$
\end{lemma}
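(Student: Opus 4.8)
The plan is to prove the two inequalities
\[
\kappa(G\Box H)\le \min\{\delta(G\Box H),\kappa(H)n(G),\kappa(G)n(H)\}
\quad\text{and}\quad
\kappa(G\Box H)\ge \min\{\delta(G\Box H),\kappa(H)n(G),\kappa(G)n(H)\}
\]
separately, assuming as usual that $G$ and $H$ are connected with at least two vertices. The upper bound is the routine direction: $\kappa(G\Box H)\le\delta(G\Box H)$ holds for every graph, and if $G$ is not complete and $T$ is a minimum vertex cut of $G$, then $T\times V(H)$ separates $G\Box H$ (indeed $(G\Box H)-(T\times V(H))=(G-T)\Box H$, which is disconnected), so $\kappa(G\Box H)\le\kappa(G)n(H)$; symmetrically $\kappa(G\Box H)\le\kappa(H)n(G)$. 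When $G=K_n$ the quantity $\kappa(G)n(H)=(n-1)n(H)$ is at least $\delta(G\Box H)$, so in that case the bound already follows from $\kappa(G\Box H)\le\delta(G\Box H)$; complete factors therefore cause no difficulty here.

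The substance is the lower bound. I would fix a set $S\subseteq V(G\Box H)$ with $|S|<\min\{\delta(G\Box H),\kappa(H)n(G),\kappa(G)n(H)\}$ and prove that $(G\Box H)-S$ is connected. Write $G^v$ for the $G$-fiber over $v\in V(H)$ and $H^u$ for the $H$-fiber over $u\in V(G)$, and set $s_v=|S\cap G^v|$, $t_u=|S\cap H^u|$, so that $\sum_v s_v=\sum_u t_u=|S|$. Two elementary consequences drive the argument. First, since $|S|<\delta(G\Box H)=\delta(G)+\delta(H)$ is smaller than the degree of every vertex, no vertex of $(G\Box H)-S$ is isolated. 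Second, since $\sum_v s_v=|S|<\kappa(G)n(H)$, strictly fewer than $n(H)$ fibers $G^v$ satisfy $s_v\ge\kappa(G)$; call a fiber with $s_v<\kappa(G)$ \emph{light}, and observe that a light fiber $G^v$ remains connected in $(G\Box H)-S$ and hence lies entirely in one component of $(G\Box H)-S$. The mirror statements hold for the $H$-fibers, using $|S|<\kappa(H)n(G)$.

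From here I would argue by contradiction: suppose $(G\Box H)-S=A\sqcup B$ with $A,B$ nonempty and no edges between them. Every light $G$-fiber lies wholly in $A$ or wholly in $B$, and likewise for light $H$-fibers. If a light $G$-fiber $G^v\subseteq A$ and a light $G$-fiber $G^{v'}\subseteq B$ have $vv'\in E(H)$, then provided some $u\in V(G)$ avoids $S$ in both fibers (a point forced by a counting argument that uses which of the three quantities realizes the minimum, and in particular that $s_v+s_{v'}$ cannot be too large) one gets an edge of $(G\Box H)-S$ between $A$ and $B$, a contradiction. Hence the "$A$-fibers" and "$B$-fibers" are separated in $H$ by the set of vertices $v$ with $G^v$ heavy; since that set has size $<n(H)$ and, by the bound $|S|<\kappa(H)n(G)$ combined with the accounting of the heavy fibers, is too small to be a vertex cut of $H$, one side must be empty. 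Running the symmetric argument with the $H$-fibers, and invoking the no-isolated-vertex fact to dispose of the degenerate configurations where an entire fiber lies in $S$ or a component consists of a single fiber, forces $A$ or $B$ to be empty — the desired contradiction.

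The main obstacle is exactly this last step: the bookkeeping showing that the few heavy $G$-fibers (respectively $H$-fibers) cannot simultaneously act as a vertex cut of the other factor. Making this precise requires splitting into cases according to which of $\delta(G\Box H)$, $\kappa(H)n(G)$, $\kappa(G)n(H)$ attains the minimum, and carefully treating small factors (for instance $\delta(G)=1$, or factors close to complete where $\kappa$ is near the order), since there a crude averaging over fibers is too weak and one must instead track individual fibers and their interaction through single edges of $G$ and of $H$. By contrast the upper bound and the two elementary consequences above are entirely straightforward.
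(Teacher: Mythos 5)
The paper does not actually prove this lemma: it is quoted verbatim from \v{S}pacapan's paper (reference \cite{simon} in the bibliography), so there is no in-paper argument to compare yours against. Judged on its own terms, your upper bound is complete and correct ($\kappa\le\delta$ always, and $T\times V(H)$ separates $G\Box H$ when $T$ is a minimum cut of a non-complete $G$; the complete case is absorbed by the $\delta$ term). The elementary observations for the lower bound are also sound: no vertex of $(G\Box H)-S$ is isolated, fewer than $n(H)$ of the $G$-fibers can be ``heavy,'' and a light fiber survives as a connected piece inside one component.

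The genuine gap is the step you yourself flag as ``the main obstacle.'' If $G^v-S\subseteq A$, $G^{v'}-S\subseteq B$ and $vv'\in E(H)$, the absence of an $A$--$B$ edge forces $s_v+s_{v'}\ge n(G)$, which is compatible with both fibers being light whenever $\kappa(G)>n(G)/2$; so the claim that adjacent light fibers on opposite sides yield an immediate contradiction fails for dense factors, and the fallback assertion that the heavy $G$-fibers ``are too small to be a vertex cut of $H$'' is not justified by any counting you have written down (the number of heavy fibers being less than $n(H)$ says nothing about whether it is less than $\kappa(H)$ or whether those fibers are positioned as a cut). This is precisely where \v{S}pacapan's argument does its real work --- the formula was a long-standing conjecture before his proof, and the dense/near-complete cases require tracking individual fibers and both fiber decompositions simultaneously rather than averaging. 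As it stands your proposal establishes only the easy inequality and reduces the hard one to an unproved bookkeeping claim, so it does not constitute a proof of the lemma.
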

In the following theorem we prove that the cartesian product of every two bipartite graphs admits a VC$2$-EW. 
\begin{ghazie}\label{main result}
If $G$ and $H$ are two connected bipartite graphs and $G\Box H\neq K_2$, then $\mu(G\Box H)\leq 2$.
\end{ghazie}
\begin{proof}{
It can be seen that $G\Box H$ is a connected bipartite graph if and only if $G$ and $H$ are connected bipartite graphs.
If  $n(G)=n(H)=2$, then $G\cong C_4$ and by Theorem~\ref{P_n & C_n}, $\mu(G)=2$.
If $n(G), n(H)>2$ and both have a leaf vertex, then by Theorem \ref{delta=1} 
and Theorem~\ref{max}, $\mu(G\Box H)\leq 2$, otherwise $\delta(G)+\delta(H)\geq 3$. Hence, by Lemma \ref{kapa}, $\kappa(G\Box H)\geq~3$ and hence by Theorem \ref{G-N[v]}, $\mu(G\Box H)\leq 2$.

Now, let $n=n(G)>2$ and $n(H)=2$. Thus, $G\Box H\cong G\Box K_2$. Let $G_1$ and $G_2$ be the induced subgraphs representing the first and the second copy of $G$, respectively. To give a VC$2$-EW for $G\Box K_2$, first we assign weight $1$ to all the edges in $G_1$ and weight $2$ to all the edges in $G_2$. We denote the unweighted edge $e$ incident to vertex $u\in V(G_1)$ by $e_u$. Thus, for every two adjacent vertices $u$ and $v$, where $u\in G_1$, and $v\in G_2$, independent to the weight of $e_u$, we have $c(u)\neq c(v)$. Now we assign a proper weight to the unweighted edges such that for every $uv\in E(G_1)$, $c(u)\neq c(v)$, that also implies $c(u)\neq c(v)$ for every edge $uv\in E(G_2)$. We do this among the following process. 

Let $U_{i}=\{u\in V(G_{1})\: |\: d_{G_{1}}(u)=i\}$, $1\leq i\leq n-1$. Now consider the partition $C=\{A\: |\: A\ \text{is a component of}\; \langle U_{i}\rangle,\; 1\leq i\leq n-1\}$ of the vertices of $G_1$. Note that to get a proper weighting, for each $u\in A$ the weight of $e_u$ forces the weight of the other edges incident to the vertices in $A$ as follows. For each unweighted  edge $e_v,\; v\in A$, let
\begin{equation}
w(e_{v})=
 \left\{
    \begin{array}{ll}
	w(e_u)+1\hspace*{-.3cm}\pmod 2 & d(u,v) \ is \ odd \\[.2cm] 
	w(e_u) & \text{otherwise,}
    \end{array}
 \right.\tag{$*$}
\end{equation}
where $d(u,v)$ is the length of a shortest path between $u$ and $v$ in $G$. Thus, for every component $A$, it is enough to assign the weight of $e_u$ only for one vertex $u\in A$.

Note that, by $(*)$,  for every edge $uv\in E(G_1)$, where $d_{G_1}(u)=d_{G_1}(v)$, $c(u)-c(v)=w(e_u)-w(e_v)\neq0$. Also if $|d_{G_1}(u)-d_{G_1}(v)|\geq2$, then independent to the weights of $e_u$ and $e_v$, we have $c(u)\neq c(v)$.

The following algorithm provide a desired edge weighting for $e_u$, $u\in A$.\\[.1cm]
\hspace*{.2cm}\begin{tabular}{|p{14cm}|}
\hline 
\tt \hspace*{.2cm}10 \hspace*{.3cm}	${\tt  h=0}$;\\[.3cm]
\hspace*{.2cm}20	\hspace*{.45cm}	{\tt while}  ($\mathtt{C\neq \emptyset}$)\\[.3cm]
\hspace*{.2cm}30	\hspace*{.9cm}	h=h+1, {\tt choose an arbitrary component} $\mathtt{A\in C}$ {\tt and define} $\mathtt{{S^{(h)}=\{A\}}}$,\\ \hspace*{1.7cm}$\mathtt{C=C\backslash A,\; p(A)=\emptyset}$ {\tt and let} $\mathtt{w(e_{u})=1}$ {\tt for an arbitrary vertex} $\mathtt{u\in A}$;\\[.2cm]
\hspace*{.2cm}40	\hspace*{26pt}	{\tt while} $\left(\begin{array}{l}
{\tt \text{\tt there exist}\ A'\in C,\ A\in S^{(h)}\ \text{\tt and egde}\ e=xy,\ \text{\tt where}\ x\in A}\\
{\tt \text{\tt and}\ y\in A'\ and\ |d_{G_{1}}(x)-d_{G_{1}}(y)|=1}
\end{array} \newline \right)$\\[.4cm]
\hspace*{.2cm}50	\hspace*{1.8cm}	${\tt S^{(h)}=S^{(h)}\mathtt{\cup} {A'}}$, ${\tt C=C\backslash A'}$, ${\tt w(e_{y})=w(e_{x})}$, ${\tt p(A')=A}$;\\ 
\hline 
\end{tabular}

To complete the proof it is enough to show that for every two adjacent vertices $u, v$, where $|d_{G}(u)-d_{G}(v)|=1$, $c(u)\neq c(v)$. Without loss of generality assume that $u\in A$, $v\in A'$ and $d_{G}(u)=d_{G}(v)-1$.

If $p(A')=A$, then there exist $x\in A$ and $y\in A'$ such that $xy\in E(G)$ and $w(e_x)=w(e_y)$. Since $G$ is bipartite $d_{A}(x,u)+d_{A'}(y,v)$ is even, thus $d_A(x,u)$ and $d_A'(y,v)$ both are even or both are odd. Therefore by $(*)$, $w(e_{u})=w(e_{v})$ and since $\deg(u)\neq \deg(v)$ we have $c(u)\neq c(v)$.

If $p(A')\not =p(A)$, then by the given algorithm, there exist $A=A_1, A_2, \ldots, A_r=~A'$ in $S^{(h)}$, for some $h$, where $p(A_{i+1})=A_{i},\; 1\leq i\leq r-1$. Moreover, for $i$, $1\leq i\leq r-1$, there exist the edges $x_iy_{i+1}$ such that $x_i, y_{i}\in~A_i$, $y_r\in~A_r$ and $w(e_{x_i})=w(e_{y_{i+1}})$.
Note that for $i$, $1\leq i\leq r-1$, $|\deg(x_i)-\deg(y_{i+1})|=1$ and $\deg(x_1)=\deg(y_r)-1$. Therefore relation $(*)$ implies that $r$ is even. On the other hand, since $G$ is bipartite, $d(u,x_1)+\sum_{i=2}^{r-1} d(y_{i},x_{i})+d(y_{r},v)+r$ is even; otherwise, we get an odd cycle in $G$. Therefore, $d(u,x_1)+\sum_{i=2}^{r-1} d(y_{i},x_{i})+d(y_{r},v)$ is even. By the equality $w(e_{x_i})=w(e_{y_{i+1}})$ and the relation $(*)$, we get $w(e_u)=w(e_v)$ which completes the proof.
}\end{proof}
Obviously, for every nontrivial graph $G$, $\mu(G)=1$ if and only if $G$ has no adjacent vertices with a same degree.
\begin{pro}\label{mu=1 for cartesian}
For every two graphs $G$ and $H$, $\mu(G\Box H)=1$ if and only if $\mu(G)=1$ and $\mu(H)=1$.
\end{pro}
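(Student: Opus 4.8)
The plan is to reduce everything to the observation recorded immediately above the proposition: for a nontrivial graph $F$ (one with $\Delta(F)\geq 2$, so that $\mu$ is defined) one has $\mu(F)=1$ if and only if no two adjacent vertices of $F$ have the same degree. This is because the all-ones weighting induces the coloring $c(v)=\deg(v)$, and the all-ones weighting is the only $1$-edge-weighting available. I shall also use the elementary identity $\deg_{G\Box H}(u,v)=\deg_G(u)+\deg_H(v)$ together with Theorem~\ref{max}. Throughout, $G$, $H$, and $G\Box H$ are taken to be graphs for which $\mu$ is defined, so that $\mu$ of each is a well-defined integer $\geq 1$.

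For the ``if'' direction, assume $\mu(G)=\mu(H)=1$. Theorem~\ref{max} gives $\mu(G\Box H)\leq\max\{\mu(G),\mu(H)\}=1$, and since every $k$-edge-weighting draws its weights from a set of size $k\geq 1$, we always have $\mu(G\Box H)\geq 1$. Hence $\mu(G\Box H)=1$.

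For the ``only if'' direction I argue by contraposition, showing that if $\mu(G)\geq 2$ or $\mu(H)\geq 2$ then $\mu(G\Box H)\geq 2$. Suppose without loss of generality that $\mu(G)\geq 2$. By the characterization above, $G$ has two adjacent vertices $u,u'$ with $\deg_G(u)=\deg_G(u')$. Pick any $v\in V(H)$. Then $(u,v)$ and $(u',v)$ are adjacent in $G\Box H$, and $\deg_{G\Box H}(u,v)=\deg_G(u)+\deg_H(v)=\deg_G(u')+\deg_H(v)=\deg_{G\Box H}(u',v)$. So $G\Box H$ has two adjacent vertices of equal degree, and therefore $\mu(G\Box H)\geq 2$; the case $\mu(H)\geq 2$ is identical.

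I do not expect any genuine obstacle here: the argument is entirely elementary. The only points that merit a little care are confirming that the three graphs lie in the range where $\mu$ is defined, so that the degree characterization may be invoked, and recalling precisely why that characterization holds (the all-ones weighting realizes $c=\deg$).
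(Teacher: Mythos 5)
Your proof is correct and follows essentially the same route as the paper's: Theorem~\ref{max} for the sufficiency, and the identity $\deg_{G\Box H}(u,v)=\deg_G(u)+\deg_H(v)$ combined with the degree characterization of $\mu=1$ for the necessity (which you merely phrase contrapositively, where the paper argues directly).
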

\begin{proof}{
By Theorem \ref{max}, the condition is sufficient. Conversely, if $\mu(G\Box H)=1$, then $d_{G\Box H}(u,v)\neq d_{G\Box H}(u',v)$, where $uu'\in E(G)$. 
 On the other hand $d_{G\Box H}(u,v)=d_{G}(u)+d_{H}(v)$. Therefore, for every edge $uu'\in E(G)$, $d_{G}(u)\neq d_{G}(u')$. This implies $\mu(G)=1$. Similarly, $\mu(H)=1$.
}\end{proof}
By Theorem \ref{main result} and Proposition \ref{mu=1 for cartesian}, we have the following corollary.
\begin{cor}
For every two connected bipartite graph $G$ and $H$, which at least one of them doesn't admit a VC$1$-EW, we have $\mu(G\Box H)=2$. Particularly, for even cycles $C_n$ and $C_m$, $\mu(C_{n}\Box C_{m})=2$, $\mu(C_n\Box P_m)=2$. Also $\mu(P_{n}\Box P_{m})=2$ and for the cube graph $Q_n=K_2\Box K_2\Box\cdots\Box K_2$, we have  $\mu(Q_{n})=2$.
\end{cor}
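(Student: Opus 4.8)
The plan is to combine the upper bound from Theorem \ref{main result} with the characterization of VC$1$-edge-weightable cartesian products in Proposition \ref{mu=1 for cartesian}. First I would observe that the hypothesis — at least one of $G$, $H$ fails to admit a VC$1$-EW — in particular forces $G\Box H$ to be a connected bipartite graph different from $K_2$ (and with at least three vertices), so Theorem \ref{main result} applies and yields $\mu(G\Box H)\le 2$. Next, Proposition \ref{mu=1 for cartesian} states that $\mu(G\Box H)=1$ if and only if $\mu(G)=1$ and $\mu(H)=1$; since by assumption at least one of these fails, we get $\mu(G\Box H)\ge 2$, and the two bounds together force $\mu(G\Box H)=2$.

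For the listed special cases the only verification needed is that the relevant factor genuinely has no VC$1$-EW, using the observation recorded just before Proposition \ref{mu=1 for cartesian}: a nontrivial graph admits a VC$1$-EW exactly when no two adjacent vertices share a degree. An even cycle $C_n$ is $2$-regular, hence has adjacent vertices of equal degree and admits no VC$1$-EW; it is also bipartite, which settles $C_n\Box C_m$ and $C_n\Box P_m$. For $P_n\Box P_m$ one notes that a path $P_n$ with $n\ge 4$ contains two adjacent internal vertices of degree $2$, so $P_n$ has no VC$1$-EW, while $P_n$ is bipartite. For the cube $Q_n$ with $n\ge 2$, one writes $Q_n=Q_{n-1}\Box K_2$ as a cartesian product of connected bipartite graphs, and uses that $Q_n$ is $n$-regular and therefore has no VC$1$-EW, so the general statement applies and gives $\mu(Q_n)=2$.

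The only real (and very mild) obstacle is the bookkeeping around degenerate factors: one must ensure that the phrase ``does not admit a VC$1$-EW'' is only applied to graphs for which $\mu$ is defined, i.e.\ those with maximum degree at least $2$, and must explicitly exclude the possibility $G\Box H=K_2$ so that $\mu(G\Box H)$ itself is well defined. Beyond this, the corollary is an immediate consequence of Theorem \ref{main result} and Proposition \ref{mu=1 for cartesian}.
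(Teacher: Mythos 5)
Your proposal is correct and matches the paper exactly: the paper gives no separate proof, stating only that the corollary follows from Theorem \ref{main result} (which gives $\mu(G\Box H)\le 2$) together with Proposition \ref{mu=1 for cartesian} (which rules out $\mu(G\Box H)=1$ when a factor fails to admit a VC$1$-EW). Your verification of the special cases via the degree criterion is the intended routine check.
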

\section{\hspace*{-.6cm}. Vertex-coloring $2$-edge-weighting}\label{sec: VC2-EW}
In this section, following to investigating the properties of graphs which admit a VC$2$-EW, first we present some sufficient conditions for a graph to admit a VC$2$-EW.
In the following, we consider separable graphs.
\begin{ghazie}\label{2 Blocks}
Let $G_1$ and $G_2$ be  simple connected graphs which $V(G_1)\cap V(G_2)=\{v\}$. If one of the following conditions holds, then $\mu(G)\leq 2$.
\begin{itemize}
\item
$d_G(v)\geq4$ and $\forall i,\; 1\leq i\leq 2$, $\mu(G_i)\leq 2$ and $\forall u\in N(v)$, $\deg(u)\leq 2$.
\item
$d_G(v)\geq4$ and $\mu(G_1)\leq 2$ and $\forall u\in N(v)$, $\deg(u)\leq 2$ and $G_2$ is a cycle.
\item
$G_1$ and $G_2$ are cycles.
\end{itemize}
\end{ghazie}
\begin{proof}
{
Assume the first condition holds. Since $\mu(G_i)\leq 2$, there exist vertex-coloring $2$-edge-weightings $w_1:E(G_1)\rightarrow\{1,2\}$ and $w_2:E(G_2)\rightarrow\{1,2\}$. Let $w:E(G)\rightarrow\{1,2\}$ be defined as follows
$$w(e)=
\left\lbrace 
\begin{array}{ll}
w_1(e) & e\in E(G_1), \\[.2cm]
w_2(e) & e\in E(G_2).
\end{array}
\right.$$
Then $c(v)\geq4$ and for every vertex $u\in N(v)$, $c(u)\leq 4$.
Now if there exists a vertex $u\in N(v)$, such that $c(u)=c(v)$, then $c(u)=4$ implies $w(uv)=2$ while $c(v)=4$ implies $w(uv)=1$, a contradiction.
Hence, $w$ is a desired edge-weighting.

Now, suppose the second condition holds, and $w_1$ be a vertex-coloring $2$-edge weighting of $G_1$. For each edge $e\in E(G_1)$ assign the weight $w_1(e)$. For other edges start from an incident edge on $v$, assign the weights $2,2,1,1$ periodically, such that the last edge be incident on $v$. It is easy to check that this is a vertex-coloring $2$-edge-weighting.

If the last condition holds. For each $G_i$, start from an incident edge on $v$, assign the weights $2,2,1,1$ periodically such that the last edge be incident on $v$. Clearly for every two adjacent vertices $x$ and $y$ which $x,y\notin \{v\}$, $c(x)\neq c(y)$.
For every $u\in N(v)$, $c(u)\in\{2,3,4\}$ while $c(v)\geq6$. Therefore, $\mu(G)=2$.
}
\end{proof}
\begin{cor}\label{k Blocks}
Let $G$ be a simple connected graph with the blocks $B_1,B_2,\ldots ,B_r$,\ $r\geq 2$, and cut vertices $v_1,v_2,\ldots ,v_{t}$. If for every i, $1\leq i\leq r$, $B_i$ satisfies one of the following conditions, then $\mu(G)\leq2$.
\begin{itemize}
\item $B_i$ is a cycle.
\item  $\mu(B_i)\leq2$ and $\forall v_j\in V(B_i),\; \forall u\in N(v_j),\; \deg(u)\leq 2$.
\end{itemize}
\end{cor}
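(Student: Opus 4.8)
The plan is to induct on the number $r$ of blocks. A preliminary observation sets the stage: $\mu$ is defined only for graphs of maximum degree at least $2$, so a $K_2$ cannot satisfy ``$\mu(B_i)\le 2$'', and a $K_2$ is not a cycle; hence under the hypotheses \emph{no block is a $K_2$}, every block is $2$-connected, and in particular every cut vertex of $G$ lies in at least two $2$-connected blocks and therefore has degree at least $4$. Moreover, whenever a cut vertex $v$ lies in a block $B_i$ of the second type, the defining property of that block gives $\deg(u)\le 2$ for every $u\in N(v)$.

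The base case $r=2$ is then essentially Theorem~\ref{2 Blocks}: if $G=B_1\cup B_2$ with $V(B_1)\cap V(B_2)=\{v\}$, then ``both $B_i$ cycles'' is the third condition of Theorem~\ref{2 Blocks}; ``both $B_i$ of the second type'' gives the first condition, since $d_G(v)\ge 4$ is automatic as above and $\deg(u)\le 2$ for all $u\in N(v)$ comes from $B_1$; and ``one cycle and one second-type block'' gives the second condition, taking $G_1$ to be the second-type block and $G_2$ the cycle. For the inductive step $r>2$, I would pass to the block--cut tree of $G$ and choose a leaf block $B$; it meets the rest of $G$ in a unique cut vertex $v$, and $G':=G-(V(B)\setminus\{v\})$ is connected with exactly $r-1$ blocks, each still satisfying one of the two conditions (passing to $G'$ does not increase any degree, and the cut vertices of $G'$ are among those of $G$). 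Hence $\mu(G')\le 2$ by the inductive hypothesis. Now apply Theorem~\ref{2 Blocks} to the pair $G',B$ sharing the vertex $v$: if $B$ is a second-type block, then $v$ is one of its cut vertices, so $d_G(v)\ge 4$ and $\deg(u)\le 2$ for all $u\in N(v)$, and the first condition of Theorem~\ref{2 Blocks} applies with $G_1=B$, $G_2=G'$; if $B$ is a cycle but $v$ lies in some second-type block (necessarily inside $G'$), the same facts hold and the second condition applies with $G_1=G'$, $G_2=B$.

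The remaining case --- and I expect it to be the main obstacle --- is when the leaf block $B$ is a cycle and \emph{every} block containing its cut vertex $v$ is a cycle; now there is no degree control at $v$, since a neighbour of $v$ may itself be a high-degree cut vertex, and Theorem~\ref{2 Blocks} does not apply off the shelf. Here I would argue directly in the spirit of the third condition of Theorem~\ref{2 Blocks}: weight each cycle through $v$ by a suitable cyclic shift of the periodic pattern $2,2,1,1$ so that it contributes at least $3$ to $c(v)$ while its remaining vertices receive colours that are pairwise distinct along the cycle and are at most $4$; since $v$ belongs to at least two such cycles, $c(v)\ge 6$, beating the colour of every neighbour of $v$ on these cycles.

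To splice this with the weighting of the rest of $G$ --- which near $v$ also consists only of cycle blocks --- the cleanest route is to strengthen the inductive statement: prove that a graph as in the corollary admits a $\{1,2\}$-edge-weighting that properly colours every vertex except possibly one prescribed cut vertex lying only in cycle blocks, and with the colour of that vertex attainable throughout a large enough palette (at least $\{3,4\}$, say); equivalently, that in the ``cycle-cactus part'' of $G$ one can always force every cut vertex to end with colour at least $6$ and every non-cut vertex with colour at most $4$. Verifying this palette claim for a single cycle carrying several cut vertices at once, and then propagating it up the block--cut tree, is where the real technical work lies; once it is in hand, the rest of the argument is the bookkeeping sketched above.
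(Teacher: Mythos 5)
The paper offers no argument for this corollary at all---it is stated as an immediate consequence of Theorem \ref{2 Blocks}---so the only sensible comparison is with the induction on blocks that the paper evidently intends, which is exactly what you attempt. Your preliminary observations (no block is a $K_2$, every cut vertex has degree at least $4$, second-type blocks control the degrees of all neighbours of their cut vertices) are correct, the base case is correct, and the two inductive subcases in which either the peeled leaf block $B$ or some other block through its attachment vertex $v$ is of the second type are correctly reduced to the first and second conditions of Theorem \ref{2 Blocks}.

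The gap is the case you flag yourself: $B$ is a cycle and every block containing $v$ is a cycle, so no hypothesis bounds the degrees of the neighbours of $v$, and Theorem \ref{2 Blocks} gives nothing. This case is unavoidable (take any cactus all of whose blocks are cycles, e.g.\ three triangles $\{v,a,b\}$, $\{v,u,c\}$, $\{u,d,e\}$ with $v$ and $u$ adjacent cut vertices), so it cannot be dodged by a cleverer choice of leaf block. Your proposed fix is only a programme, not a proof: the strengthened inductive statement is never verified, and as literally stated (``every cut vertex gets colour at least $6$, every non-cut vertex at most $4$'') it does not even suffice, since two adjacent cut vertices in a common cycle block would both receive colours at least $6$ and nothing separates them; the vaguer ``attainable throughout a large enough palette'' version is where the actual work lies and is left undone. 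So the proposal is incomplete precisely at the configuration---adjacent high-degree cut vertices inside a cycle block---that makes the corollary non-trivial; it is worth noting that this same configuration shows the corollary does not follow from Theorem \ref{2 Blocks} as mechanically as the paper's silence suggests, so locating the obstruction is itself a useful observation, but it must still be closed.
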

\begin{ghazie}\label{thm: MSP}
Let $G\ncong C_{4k+r}$ for $r=1,2,3$ be a connected graph. If one of the following conditions holds, then $\mu(G)\leq2$.
\begin{itemize}
\item $G$ contains no edge as a maximal simple path and for every maximal simple path of $G$ which is of length $1\pmod4$ connecting two vertices $x$ and $y$, $\{\deg(x),\deg(y)\}\neq\{3\}$.
\item $G$ contains no maximal simple path of length $3\pmod4$ and for every edge $e=xy$ which is a maximal simple path of $G$, $\deg(x)\neq\deg(y)$.
\end{itemize}
\end{ghazie}
\begin{proof}
{
The edge set of $G$ is disjoint union of maximal simple paths. We assign the weights $1$ and $2$ to the edges of every maximal simple path, starting from one of the end-edges, according to the pattern $1,1,2,2$ and the pattern $2,1,1,2$ periodically, for maximal simple paths of length $2$ and length $0,1$ or $3$, respectively. Now, if there exists an edge $uv\in E(G)$ such that $c(u)=c(v)$, then it is clear that  this edge is an end-edge for a maximal simple path. Therefore one of the vertices $u$ or $v$ is an end-vertex of a maximal simple path. Without loss of generality assume that $v$ is such a vertex.

If $\deg(v)=1$, then $c(v)<c(u)$. If $\deg(v)=2$, then $G\cong C_{4k}$ and it is easy to see that the given pattern is a VC$2$-EW for $G$. Thus, we assume that $\deg(v)\geq3$.

Assume the first condition holds. Since for every vertex $u\in N(v)$, $\deg(u)=2$ and by the given patterns, we have $c(u)\in\{2,3,4\}$. Clearly if $\deg(v)>4$, then $c(v)>4$ and $c(u)\neq c(v)$. On the other hand, if $\deg(v)=c(v)=4$, then by the given patterns, $c(x)=2$ for every $x\in N(v)$. Thus we assume that $\deg(v)=3$ and there is two possibilities $c(v)=3$ and $c(v)=4$.
If $c(v)=3$, then by the given patterns, $c(x)=2$ for every $x\in N(v)$. Let $c(v)=4$. Thus one of the edges on $v$ has weight $2$ and by the given patterns this edges belongs to a maximal simple path of length $1\pmod4$. Replace the pattern of this path from $2,1,1,2$ to $1,1,2,2$, Since the vertex which is on other end of this path is not of degree $3$, this process reduce the number of edges with two end of same color. Hence, repeated applications give the desired weighting.

If the last condition holds, we change the pattern of maximal simple paths of length $2\pmod4$ to $2,2,1,1$. Since every edge incident to $v$ has weight $2$, we have $c(v)=2\deg(v)\geq6$ and if $e=uv$ is a maximal simple path of $G$, by the assumption, $c(u)\neq c(v)$. we assume that $u$ is an internal vertex for a maximal simple path. Hence, $\deg(u)=2$ and by the given patterns, we have $c(u)\in\{3,4\}$. Therefore $c(u)\neq c(v)$.
}
\end{proof}
\begin{cor}\label{cor: MSP}
Let $G\ncong C_{4k+r}$ for $r=1,2,3$ be a connected graph. If one of the following conditions holds, then $\mu(G)\leq2$.
\begin{itemize}
\item $G$ contains no maximal simple path of length $1\pmod 4$.
\item $G$ contains no maximal simple path of length $3\pmod4$ and no edge as a maximal simple path.
\item Every maximal simple path of $G$ is of length $4k+1$, for $k\geq1$.
\end{itemize}
\end{cor}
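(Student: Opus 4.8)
The plan is to deduce Corollary~\ref{cor: MSP} directly from Theorem~\ref{thm: MSP}: I would show that each of the three hypotheses listed in the corollary forces one of the two hypotheses of Theorem~\ref{thm: MSP} to hold, and since the corollary already inherits the exclusion $G\ncong C_{4k+r}$, $r=1,2,3$, the conclusion $\mu(G)\leq 2$ follows at once. (If it happens that $G\cong C_{4k}$, which vacuously satisfies all three bullets, this case is in any event covered, e.g.\ by Theorem~\ref{P_n & C_n} or by the $\deg(v)=2$ branch inside the proof of Theorem~\ref{thm: MSP}.)

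For the first bullet, suppose $G$ has no maximal simple path of length $1\pmod 4$. An edge regarded as a maximal simple path has length $1\equiv 1\pmod 4$, so this hypothesis already implies that $G$ has no edge as a maximal simple path; and the remaining clause of the first condition of Theorem~\ref{thm: MSP}, namely that every maximal simple path of length $1\pmod 4$ joining $x$ and $y$ satisfies $\{\deg(x),\deg(y)\}\neq\{3\}$, is vacuously true. Hence the first condition of Theorem~\ref{thm: MSP} holds. For the second bullet, if $G$ has no maximal simple path of length $3\pmod 4$ and no edge as a maximal simple path, then the first clause of the second condition of Theorem~\ref{thm: MSP} holds verbatim, while its second clause, $\deg(x)\neq\deg(y)$ for every edge $e=xy$ that is a maximal simple path, is vacuous; so again Theorem~\ref{thm: MSP} applies.

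The only point requiring a moment's care is the third bullet, where one must pair the hypothesis with the \emph{second} condition of Theorem~\ref{thm: MSP} rather than the first: a maximal simple path of length $4k+1$ can perfectly well have both endpoints of degree $3$ (for instance when $G$ consists of two vertices joined by three internally disjoint paths of length $5$), so the first condition of Theorem~\ref{thm: MSP} need not hold. However, if every maximal simple path of $G$ has length $4k+1$ with $k\geq 1$, then every such path has length $\equiv 1\pmod 4$, so none has length $\equiv 3\pmod 4$, and every such path has length at least $5$, so no edge is a maximal simple path; exactly as in the second bullet, the second condition of Theorem~\ref{thm: MSP} is met, and $\mu(G)\leq 2$. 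I do not expect any genuine obstacle here: the argument is purely a matter of matching each hypothesis of the corollary to the correct branch of Theorem~\ref{thm: MSP}, with no new combinatorial input beyond that theorem.
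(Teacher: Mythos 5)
Your proposal is correct and matches the paper's (implicit) treatment: the corollary is stated without proof as an immediate consequence of Theorem \ref{thm: MSP}, and your case-by-case matching --- bullets one and two to the first and second conditions of the theorem respectively, and bullet three necessarily to the second condition since both endpoints of a path of length $4k+1$ may have degree $3$ --- is exactly the intended derivation. The observation that an edge, viewed as a maximal simple path, has length $1\equiv 1\pmod 4$, so that the first bullet subsumes the ``no edge as a maximal simple path'' clause, is the only point of substance and you handle it correctly.
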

\begin{cor}\label{cor: p and p'}
Let $G\ncong C_{4k+r}$ for $r=1,2,3$ be a connected graph contains no edge as a maximal simple path. If $\mu(G)>2$, then $G$ has $P$ and $P'$ as a maximal simple path of length $1\pmod4$ and $3\pmod4$, respectively.
\end{cor}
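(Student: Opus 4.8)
The plan is to argue by contraposition, playing the two hypotheses of Corollary~\ref{cor: MSP} against each other. Assume $G\ncong C_{4k+r}$ ($r=1,2,3$) is connected, contains no edge as a maximal simple path, and satisfies $\mu(G)>2$; I want to produce both a maximal simple path $P$ of length $\equiv 1\pmod 4$ and a maximal simple path $P'$ of length $\equiv 3\pmod 4$.

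First I would handle $P$. Suppose, for contradiction, that $G$ has \emph{no} maximal simple path of length $\equiv 1\pmod 4$. Then $G$ falls under the first bullet of Corollary~\ref{cor: MSP}, which (given $G\ncong C_{4k+r}$) yields $\mu(G)\le 2$, contradicting the assumption $\mu(G)>2$. Hence such a path $P$ exists. Next I would handle $P'$ the same way: suppose $G$ has no maximal simple path of length $\equiv 3\pmod 4$. Since by hypothesis $G$ also has no edge as a maximal simple path, the pair of conditions in the second bullet of Corollary~\ref{cor: MSP} is satisfied, so again $\mu(G)\le 2$, a contradiction. Therefore a maximal simple path $P'$ of length $\equiv 3\pmod 4$ exists as well, completing the argument.

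There is essentially no obstacle here beyond correctly matching the negated statements to the bullets of Corollary~\ref{cor: MSP}: the only point that needs a moment's care is that the "no edge as a maximal simple path" assumption is exactly what lets the second bullet apply, and that the exclusion $G\ncong C_{4k+r}$ is inherited so that Corollary~\ref{cor: MSP} is usable in both cases. Since the edge set of any connected graph (other than a cycle) decomposes into maximal simple paths, the two cases together show $G$ must contain paths of both residues $1$ and $3$ modulo $4$, as claimed.
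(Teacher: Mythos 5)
Your proposal is correct and is exactly the intended derivation: the paper states this as an immediate consequence of Corollary~\ref{cor: MSP} without further proof, and your contrapositive matching of the two negated conclusions to the first two bullets of that corollary (using the standing hypothesis of no edge as a maximal simple path for the second bullet) is precisely the argument.
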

\rem{
In every VC$2$-EW of path $P_n$, if $n\equiv 1\pmod 4$, two end-edges of $P_n$ get a same weight and if $n\equiv 3\pmod 4$, they get different weights. Furthermore, if $n\not\equiv1,3\pmod4$, then for every arbitrary assignment to end-edges of $P_n$ with $\{1,2\}$, there is a VC$2$-EW for $P_n$.
}

The same fact for the remaining cases, that every maximal simple path of $G$ is of length $1$ or for example $G$ has maximal simple paths of length $1$ and $3$, is not true.
The following proposition gives an example of graphs in which every maximal simple path is of length $1\pmod4$ and $\mu(G)=2$, while $\mu(K_n)=3$.
Also, Figure \ref{exm1} is an example of graphs which contain maximal simple path of length $1\pmod4$ and $3\pmod4$ with $\mu(G)=3$.

\begin{pro}\label{equitable}
For every complete $r$-partite graph $K_{n,n,\ldots ,n}$ with $r,n\geq 2$, $\mu(K_{n,n,\ldots ,n})=~2$.
\end{pro}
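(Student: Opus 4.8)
The plan is to establish both inequalities $\mu(K_{n,\ldots,n})\ge 2$ and $\mu(K_{n,\ldots,n})\le 2$, the first being immediate. Since $K_{n,\ldots,n}$ is $(r-1)n$-regular with $(r-1)n\ge 2$ and contains an edge, it has two adjacent vertices of equal degree, so by the observation preceding Proposition~\ref{mu=1 for cartesian} we get $\mu>1$, hence $\mu\ge 2$. It thus remains to exhibit a VC$2$-EW, and I would split on $r$.

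For $r=2$ I would weight $K_{n,n}$ directly: writing $V_1=\{u_1,\dots,u_n\}$, give weight $2$ to every edge incident with $u_1$ and weight $1$ to all other edges. Then $c(u_1)=2n$, $c(u_i)=n$ for $2\le i\le n$, and $c(v)=2+(n-1)=n+1$ for every $v\in V_2$; as $n\ge 2$ these three values are pairwise distinct, and since every edge of $K_{n,n}$ joins $V_1$ to $V_2$, this is a VC$2$-EW.

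For $r\ge 3$ the strategy is to make the induced colouring $c$ \emph{constant on each part}, say $c\equiv c_i$ on $V_i=\{v_{i,1},\dots,v_{i,n}\}$, with $c_1,\dots,c_r$ pairwise distinct; this suffices because every edge of $K_{n,\ldots,n}$ joins two distinct parts. To do this, for each pair $i<j$ I would pick an integer $b_{ij}=b_{ji}\in\{0,1,\dots,n\}$ and weight the bipartite graph between $V_i$ and $V_j$ so that every vertex of $V_i\cup V_j$ receives total weight exactly $n+b_{ij}$ from the opposite part: let the weight-$2$ edges between $V_i$ and $V_j$ form a $b_{ij}$-regular bipartite graph (for instance $v_{i,a}$ joined to $v_{j,a},v_{j,a+1},\dots,v_{j,a+b_{ij}-1}$, indices taken modulo $n$), which is possible precisely because $0\le b_{ij}\le n$. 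Under this weighting every vertex of $V_i$ gets colour $c_i:=(r-1)n+\sum_{j\ne i}b_{ij}$, so the whole problem reduces to choosing the symmetric array $(b_{ij})$ (zero diagonal, entries in $\{0,\dots,n\}$) whose $r$ row sums $d_i:=\sum_{j\ne i}b_{ij}$ are pairwise distinct; equivalently, to realising $r$ distinct nonnegative integers as the degree sequence of a loopless multigraph on $r$ vertices with all edge multiplicities at most $n$.

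The heart of the argument, and the step I expect to be the main obstacle, is exactly this last realisation, and it is here that both hypotheses are used: for $r=2$ a symmetric array forces $d_1=d_2$, which is why that case had to be treated separately, while for $r\ge 3$ one can always succeed using multiplicities at most $2\le n$. For $r=3$ take $(b_{12},b_{13},b_{23})=(0,1,2)$, giving $(d_1,d_2,d_3)=(1,2,3)$. For $r\ge 4$ take the row-sum sequence $(0,1,2,\dots,r-2,X)$ with $X\in\{r-1,r\}$ chosen so that the sum is even, and check it is realisable with maximum multiplicity $2$; the relevant Erd\H{o}s--Gallai-type inequalities hold with room to spare since the top degree is at most $r\le 2(r-1)$, and alternatively one may simply set $b_{1j}=0$ for all $j$ and induct on the remaining $r-1\ge 3$ parts (realising $r-1$ distinct positive degrees). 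Finally, since $d_i\le 2(r-1)\le (r-1)n$, the colours $c_i$ lie in the admissible range $[(r-1)n,\,2(r-1)n]$ and are distinct by construction, completing the proof that $\mu(K_{n,\ldots,n})=2$.
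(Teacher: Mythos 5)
Your proof is correct in outline and takes a genuinely different route from the paper's. The paper simply exhibits one explicit weighting for all $r,n\geq 2$: between parts $i<j$, the last vertex of part $i$ sends weight $2$ along every edge into part $j$ and all other edges get weight $1$, so each part carries exactly two colours, $(r-i)n+(i-1)(n+1)$ and $2(r-i)n+(i-1)(n+1)$, and a short computation using $n\geq 2$ shows no two of these coincide across distinct parts. (The paper leaves the lower bound $\mu\geq 2$ to the regularity remark preceding Proposition \ref{mu=1 for cartesian}; you state it explicitly, which is cleaner.) Your $r=2$ and $r=3$ constructions are correct, and your reduction for $r\geq 3$ --- make the colouring constant on each part by letting the weight-$2$ edges between $V_i$ and $V_j$ form a $b_{ij}$-regular bipartite graph, so that $c_i=(r-1)n+\sum_{j\neq i}b_{ij}$, then choose $b_{ij}\in\{0,1,2\}$ with pairwise distinct row sums --- is arguably more conceptual than the paper's, at the price of an extra combinatorial lemma.

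That lemma is where the gap is: for $r\geq 4$ you assert the realisability of $r$ distinct row sums rather than prove it. The Erd\H{o}s--Gallai appeal is left entirely unchecked, and the inductive alternative as written does not close: setting $b_{1j}=0$ for all $j$ forces $d_1=0$, so what the iteration \emph{produces} is ``$r$ distinct nonnegative row sums, one of them zero,'' while what it \emph{consumes} is ``$r-1$ distinct positive row sums'' --- a different statement that you never establish and that your own step cannot supply, since it always manufactures a part of row sum $0$. The fact you need is true and easily proved with a correctly alternating induction (starting from $(b_{12},b_{13},b_{23})=(0,1,2)$, alternately adjoin a new part with $b=0$ to all previous parts and a new part with $b=2$ to all previous parts; one checks that a zero row sum is never present when the $b=0$ part is added and that the new $b=2$ part's row sum $2m$ strictly exceeds the shifted old maximum), so your proof is completable; but as it stands the key step for $r\geq 4$ is unproved. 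The paper's weighting sidesteps this entirely at the cost of tracking two colour values per part instead of one.
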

\begin{proof}
{Let
\begin{equation*}
A=\left(%
\begin{array}{cccc}
0 & B & \cdots & B \\
B^{T} & \ddots & \ddots & \vdots \\
\vdots & \ddots & \ddots & B \\
B^{T} & \cdots & B^{T} & 0 \\
\end{array}%
\right)_{rn\times rn}
\end{equation*}
be the weighted adjacency matrix of $G=K_{n,n,\ldots ,n}$, where
$$B=\left(%
	\begin{array}{cccc}
		1 & 1 & \ldots & 1 \\
		\vdots & \vdots & & \vdots \\
		1 & 1 & \ldots & 1 \\
		2 & 2 & \ldots & 2 \\
	\end{array}
\right)_{n\times n}$$
Note that the induced color on every vertex is the sum of entries in it's corresponding row in $A$. Thus, for every two arbitrary vertices $u$ and $v$ in $i$-th part and $j$-th part of $G$, $c(u)\in\{(r-i)n+(i-1)(n+1),2(r-i)n+(i-1)(n+1)\}$ and $c(v)\in\{(r-j)n+(j-1)(n+1),2(r-j)n+(j-1)(n+1)\}$. Since $i\neq j$ we have
\begin{equation*}
 (r-i)n+(i-1)(n+1) \neq (r-j)n+(j-1)(n+1)
\end{equation*}
and
\begin{equation*}
2(r-i)n+(i-1)(n+1)\neq 2(r-j)n+(j-1)(n+1)
\end{equation*}
On the other hand, $j\leq r$ hence, $j-i\leq n(r-i)$ with equality if and only if $n=1$ and $j=r$. Thus $(r-j)n+(j-1)(n+1)\neq 2(r-i)n+(i-1)(n+1)$. Similarly $(r-i)n+(i-1)(n+1)\neq 2(r-j)n+(j-1)(n+1)$. Therefore, $A$ is a desired edge weighting. 
}
\end{proof}
\begin{exm}\label{exm:mu=3}
By the Remark, it's easy to check that the graph is shown in Figure \ref{exm1} is a bipartite graph with $\mu(G)=3$, in which $p_{i}$, $q_{i}$ and $r_{i}$ are paths of length $1$, $1$ and $3\pmod 4$, respectively.
\begin{figure}[ht]
\centering
{\includegraphics[scale=.5]{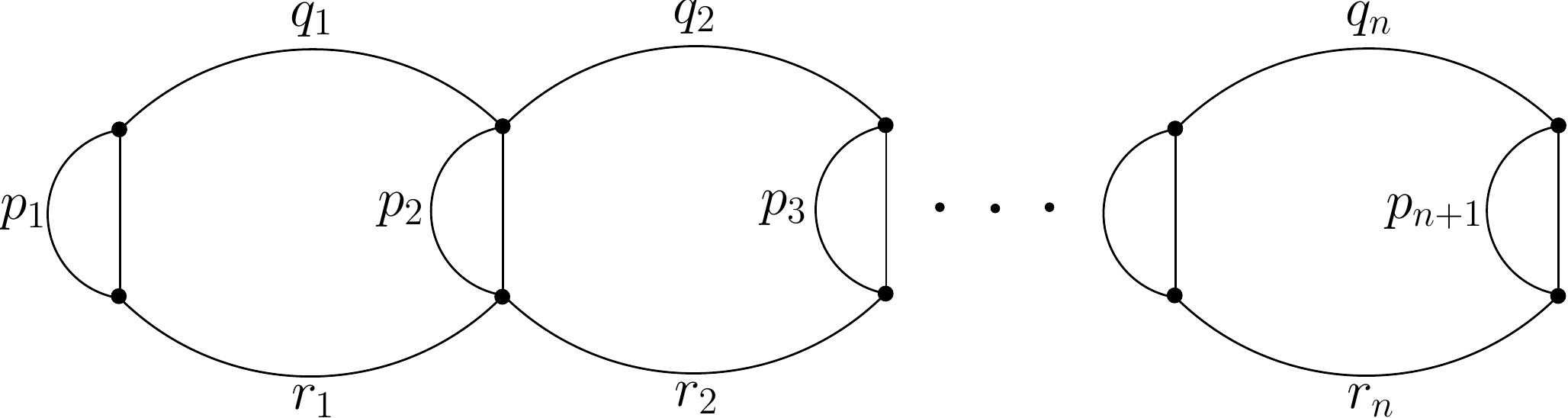} }
\caption{A family of $2$-connected bipartite graphs with $\mu(G)=3$\label{exm1}}
\end{figure}
\end{exm}
We end this section with the following theorem which is an improvement of Theorem \ref{thm: MSP}.
\begin{ghazie}\label{thm: our conj}
If $G\ncong C_{4k+r}$ for $r=1,2,3$ is a connected graph contains no edge as a maximal simple path, then $\mu(G)\leq2$.
\end{ghazie}
\section{\hspace*{-.6cm}. Vertex-coloring edge-weighting of bipartite graphs}\label{sec: VCEW of bipartite}
It is known that every $3$-colorable graph admits a VC$3$-EW \cite{123}, and there are bipartite graph $G$ with $\mu(G)=2$ and also bipartite graph $G$ with $\mu(G)=3$. 
The only known bipartite graphs with $\mu(G)=3$ are $C_{4k+2}$ and the theta graphs $\theta(1,4k_2+1,\ldots,4k_r+1)$ given in \cite{chang}. 
The \textit{theta graph} $\theta(l_{1}, l_{2},\ldots , l_{r})$ is the graph obtains from $r$ disjoint paths, of lengths $l_{1}, l_{2},\ldots , l_{r}$, respectively, by identifying their end-vertices called \textit{the roots} of the graph. Notice that $\theta(l_{1})=P_{1+l_{1}}$ and $\theta(l_{1},l_{2})=C_{l_{1}+l_{2}}$.
In Example \ref{exm:mu=3}, we constructed more bipartite graphs with $\mu(G)=3$. The natural question is whether there exists another well known family of bipartite graphs with $\mu(G)=3$? Regarding to answer to this question, in this section, we consider a generalization of theta graphs. The theta graphs are special cases of well known family of graphs called generalized polygon trees.
First, in the following theorem we give a sufficient condition to guarantee that a bipartite graph admit a VC$2$-EW. 
\begin{ghazie}\label{d(v)>d(u)}
If $G$ is a connected bipartite graph contains a vertex $v$ such that $\deg(v)>\deg(u)$ for every $u\in N(v)$ and $G-v$ is connected, then $\mu(G)\leq 2$.
\end{ghazie}
\begin{proof}{
Let $U$ and $W$ be parts of the graph $G$. If $|U|.|W|$ is even, by Theorem \ref{delta=1}, the result follows. Thus we assume that both $|U|$ and $|W|$ are odd. Let $v\in U$ satisfy $\deg(v)>\deg(u)$ for every $u\in N(v)$ and $G-v$ is connected. Since $|U-v|$ is even, by Theorem \ref{delta=1}, $G-v$ admits a VC$2$-EW such that $c(x)$ is odd for every $x\in U-v$ and $c(y)$ is even for every $y\in W$. Now we assign weight $2$ to all the edges that are adjacent to vertex $v$. 
Clearly $c(x)$ is odd for every $x\in U-v$ and $c(y)$ is even for every $y\in W$. Also $c(v)=2d_G(v)>2d_G(u)\geq c(u)$ for every $u\in N(v)$.
}
\end{proof}
\begin{ghazie} {\rm\cite{chang}}\label{theta}
Let $G=\theta(l_{1}, l_{2},\ldots , l_{r})$, where $r\geq 3$ and $1\leq l_{1}\leq l_{2}\leq\cdots\leq l_{r}$. Then,
\begin{center}
$\mu(G)=
 \left\{
    \begin{array}{ll}
	1 & l_i=2, \; \forall i \\ 
	3 & l_1=1 \; \text{and} \;\; l_i\equiv 1\pmod4 \; \forall i\not =1\\
    2 & {\text{otherwise.}}
    \end{array}
 \right.$
\end{center}
\end{ghazie}
A \textit{generalized polygon tree} is the graph defined recursively as follows. A cycle $C_{p}$ $(p\geq 3)$ is a generalized polygon tree. Next, suppose $H$ is a generalized polygon tree containing a simple path $P_{k}$, where $k\geq 1$. If $G$ is a graph obtained from the union of $H$ and a cycle $C_{r}$, where $r>k$, by identifying $P_{k}$ in $H$ with a path of length $k$ in $C_{r}$, then $G$ is also a generalized polygon tree.\cite{koh}

In the following theorem we consider bipartite generalized polygon trees with at most three interior regions, (see Figure \ref{gpt}) and by determining the $\mu(G)$, we classify that in which conditions they admit a VC$2$-EW.
\begin{ghazie}\label{thm:GPTs}
Let G be a bipartite generalized polygon tree with at most three interior regions shown in Figure {\rm\ref{gpt}}, which $p_{1},p_{2},\ldots ,p_{6}$ are paths of length $a$, $b$, $c$, $d$, $e$ and $f$, respectively. Then $\mu(G)=1$ if and only if one of the following conditions holds
\begin{itemize}
\item $a=b=c=d=e=f=2$.
\item $a=b=e+f=2$ and $c=d=0$.
\item $a=d=f=2$, $b=c=1$ and $e=0$.
\item $a=d=f=2$, $b=c=2$ and $e=0$.
\item $a=b=c=d=2$ and $e=f=0$.
\end{itemize}
And $\mu(G)=3$ if and only if one of the following conditions holds
\begin{itemize}
\item $a=b=c=d=0$ and $e+f\equiv 2\pmod 4$.
\item $b=1$, $c=d=0$ and $a,e+f\equiv1\pmod4$.
\item $b=1$, $e=f=0$ and $a,c,d\equiv 1\pmod 4$.
\item $b=c=1$, $a,d,e\equiv 1\pmod 4$ and $f\equiv 3 \pmod 4$.
\end{itemize}
Otherwise $\mu(G)=2$.
\begin{figure}[ht]
\centering
{\includegraphics[scale=.4]{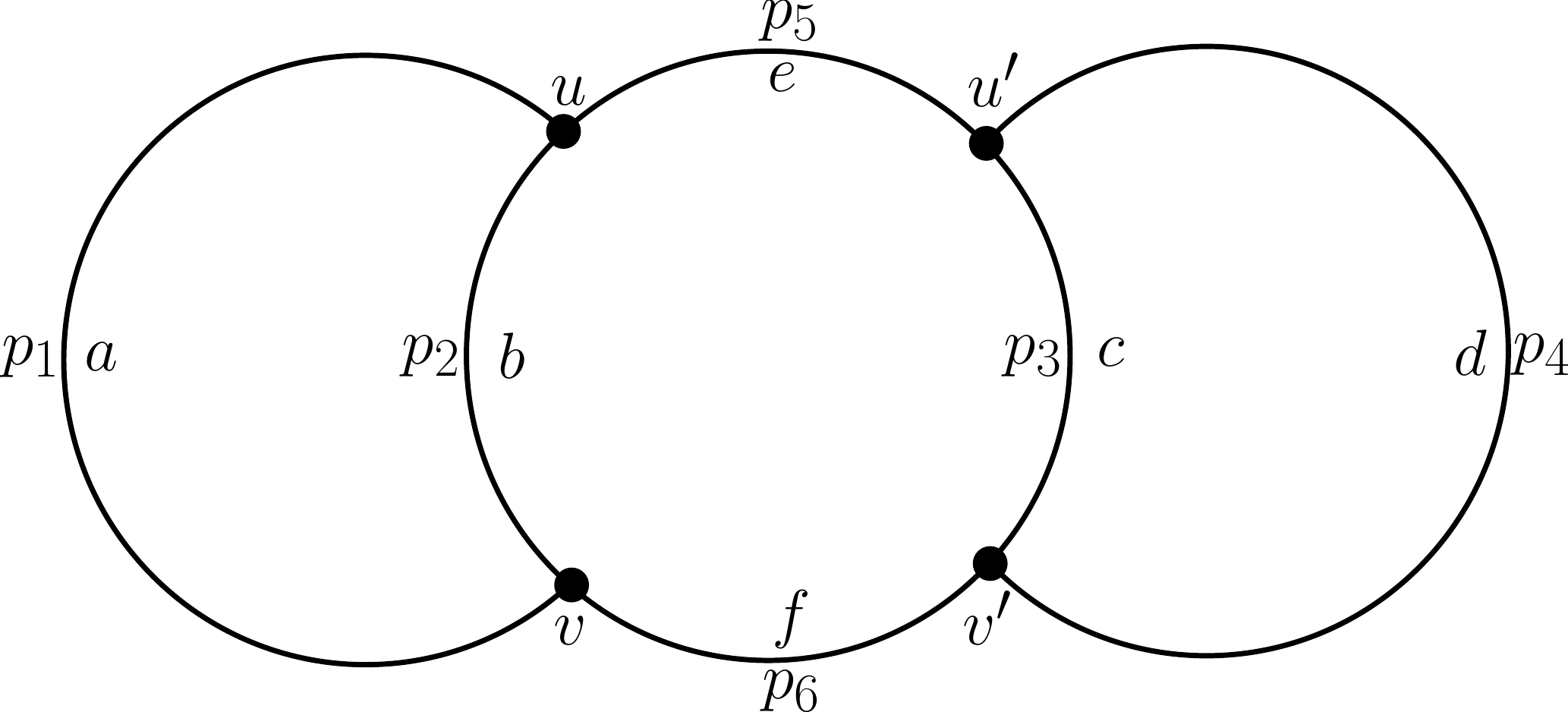}}
\caption{A generalized polygon tree with three interior regions\label{gpt}}
\end{figure}
\end{ghazie}
\begin{proof}
{
We know that a graph admits a vertex-coloring 1-edge-weighting if and only if every two adjacent vertices have different degrees. It can be verify that this condition holds if and only if one of the given possibilities in the statement occurs.

For $b$ and $c$ there is three possibilities $b=c=0$ or $b\neq 0$, $c=0$ or $b,c\neq 0$.

Let $b=c=0$. If $a=d=0$, then $G$ is an even cycle $C_{e+f}$ and by Theorem \ref{P_n & C_n}, if $e+f\equiv 0\pmod 4$, then $\mu(G)=2$, otherwise, $\mu(G)=3$. If $a\neq 0$ or $d\neq 0$, by Corollary~\ref{k Blocks}, $\mu(G)=2$.

Now let $b\neq 0$ and $c=0$. If $d=0$ then $G\cong \theta(a,b,e+f)$, and we have done. For $d\neq 0$, $G$ is a graph with two blocks $\theta(a,b,e+f)$ and cycle $C_d$.
Thus by Theorem \ref{2 Blocks}, $\mu(G)=2$ unless  $\mu(\theta(a,b,e+f))=3$. In this case we give a VC$2$-EW for $G$  in Figure $3$(\ref{last}). In the figures the edges and paths are denoted by straight lines and curves, respectively, and every path is weighted periodically by the given pattern through the denoted direction. 

Otherwise $b,c\neq 0$. In this case, we consider three possibilities $b,c>1$ or $b=1$, $c>1$ or $b=c=1$.

Now let $b,c>1$. In this case if $e\neq 1$ or $f\neq 1$, then there exists a vertex $x$ with $\deg(x)\geq 3$ such that $\deg(x)>\deg(y)$ for all $y\in N(x)$ and $G-x$ is connected. Thus by Theorem \ref{d(v)>d(u)},
 $\mu(G)\leq 2$. Therefore we assume that $e=f=1$. Notice that if $a=0$ or $d=0$, then $G$ is a theta graph and we have done. Now, since $G$ is bipartite $b+c$ is even. Hence, $b+c\equiv 0\pmod4$ or $b+c\equiv 2\pmod4$. For the first possibility if there is a part of even order, by Theorem \ref{delta=1},
we have done. Otherwise we have two odd parts and $a+d\equiv2\pmod4$. In this case we have four possibilities $a\equiv2\pmod4$, $b,c,d\equiv0\pmod4$ or $a,b,c\equiv2\pmod4$, $d\equiv0\pmod4$ or $a,b,d\equiv1\pmod4$, $c\equiv3\pmod4$ or $a,c,d\equiv3\pmod4$, $b\equiv1\pmod4$, in which in each case the given pattern in Figure $3$(\ref{2-1-1}) is a VC$2$-EW for $G$. For the latter case, $a+d\equiv2\pmod4$ give a part with even number of vertices and if both parts have odd number of vertices and $a+d\equiv0\pmod4$ by the symmetry of $b$, $c$ and $a$, $d$ with the same discussion we get the desired result.

Now let $b=1, c>1$. If $e=f=0$, then $G\cong\theta(a,1,c,d)$. Thus we assume that $\{e,f\}\neq\{0\}$. In this case if $e\neq1$ or $f\neq1$, then there exists a vertex $x$ with $\deg(x)\geq 3$ such that $\deg(x)>\deg(y)$ for all $y\in N(x)$ and $G-x$ is connected. Thus by Theorem \ref{d(v)>d(u)}, $\mu(G)\leq 2$. Therefore, we assume $e=f=1$. Since $G$ is bipartite $a,c$ and $d$ are odd. For the cases $a,c,d\equiv 3\pmod4$ and $a\equiv 3\pmod4$, $c,d\equiv 1\pmod4$   and $a,c\equiv 1\pmod4$, $d\equiv 3\pmod4$ in Figure $3$(\ref{3-1}) are given a VC$2$-EW. Otherwise, $G$ has a part of even order and by Theorem \ref{delta=1},
$\mu(G)\leq 2$.

Now if $b=c=1$, then since $G$ is bipartite, $e+f$ is even and $a$, $d$ are odd.\\[.5cm]
\textbf{Case 1. $a,d\equiv 1\pmod 4$.} If $e+f\equiv 2\pmod 4$, then $G$ has a part of even number of vertices and by Theorem \ref{delta=1},
$\mu(G)\leq 2$. Thus we assume $e+f\equiv 0\pmod4$.
For the cases $e,f\equiv 0\pmod 4$ and $e,f\equiv 2\pmod 4$ we give a VC$2$-EW for $G$ in Figures $3$(\ref{1-1-1}) and $3$(\ref{1-1-2}), respectively.

For the case $e\equiv 1, f\equiv 3\pmod 4$ first we show that $\mu(G)\geq 3$. Suppose to the contrary that $G$ admits a VC$2$-EW. Since $a\equiv 1\pmod4$, by the Remark, in $p_1$ the incidence edges on $u$ and $v$ have a same weight. Similarly incidence edges on $u'$ and $v'$ in $p_4$. Thus, the incidence edges on $u$, $v$ through $p_5$ and $p_6$ must have different weights. On the other hand, since $e\equiv 1\pmod4$, two end-edges on $p_5$ get the same weight but two edges on $p_6$ get different weights, because $f\equiv 3\pmod4$. Therefore $c(u')=c(v')$. This contradiction implies $\mu(G)\geq 3$. On the other hand, $G$ is bipartite, thus is $3$-colorable and $\mu(G)\leq 3$. Therefore, $\mu(G)=3$.\\[.5cm]
\textbf{Case 2. $a,d\equiv 3\pmod 4$.} If $e+f\equiv 2\pmod 4$, then by Theorem \ref{delta=1},
$\mu(G)\leq 2$. Thus we assume $e+f\equiv 0\pmod 4$. Now one of the three possibilities $e,f\equiv 0\pmod 4$ or $e,f\equiv 2\pmod 4$ or $e\equiv 1, f\equiv 3\pmod 4$ occurs, in which in each case the given pattern in Figure $3$(\ref{1-1-1}) is a VC$2$-EW for $G$.\\[.5cm]
\textbf{Case 3. $a\equiv 1, d\equiv 3\pmod 4$.} If $e+f\equiv 0\pmod 4$, then by Theorem \ref{delta=1},
$\mu(G)\leq 2$. Thus we assume $e+f\equiv 2\pmod 4$ and for the cases $e,f\equiv1\pmod4$ or $e\equiv0\pmod4$, $f\equiv2\pmod4$ and  the case $e,f\equiv3\pmod4$ we give a VC$2$-EW for $G$ in Figures $3$(\ref{1-1-2}) and $3$(\ref{1-1-1}), respectively. Notice that in the case $e\equiv0\pmod4$, $f\equiv2\pmod4$ if $e=0$, then by Theorem \ref{d(v)>d(u)}, $\mu(G)\leq2$. Also in the case $e,f\equiv1\pmod4$ if $e=1$, we can replace weight of the edge $uv$ with $1$ to get a VC$2$-EW.

\setcounter{figure}{0}
\renewcommand{\figurename}{{}}
\begin{tabular}{|c|c|c|}
\hline
\begin{minipage}{5.09cm}
\hspace*{-.18cm}\includegraphics[scale=.26]{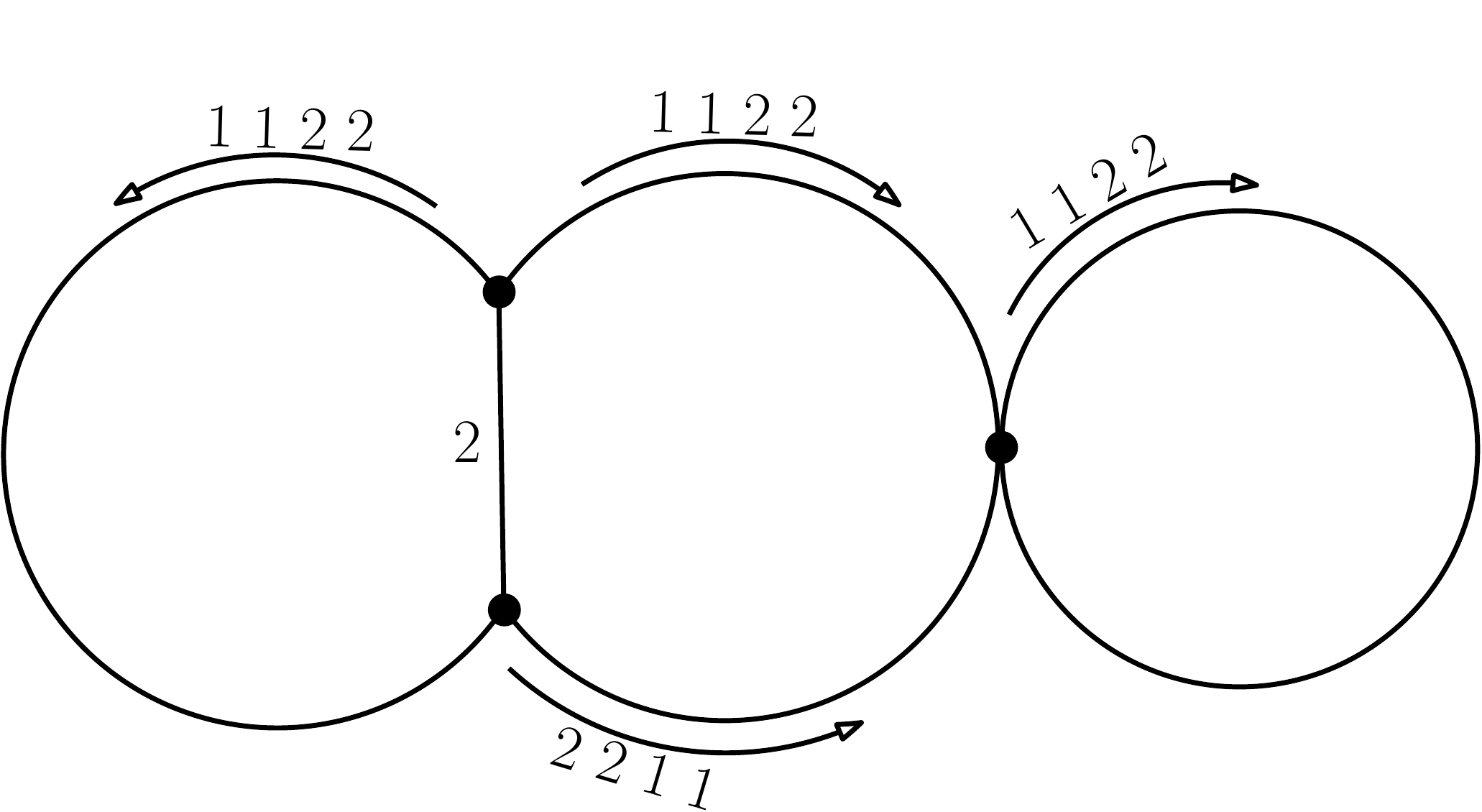}
 \captionof{figure}{\label{last}}
\end{minipage}
&
\begin{minipage}{4.32cm}
\hspace*{-.14cm}\includegraphics[scale=.26]{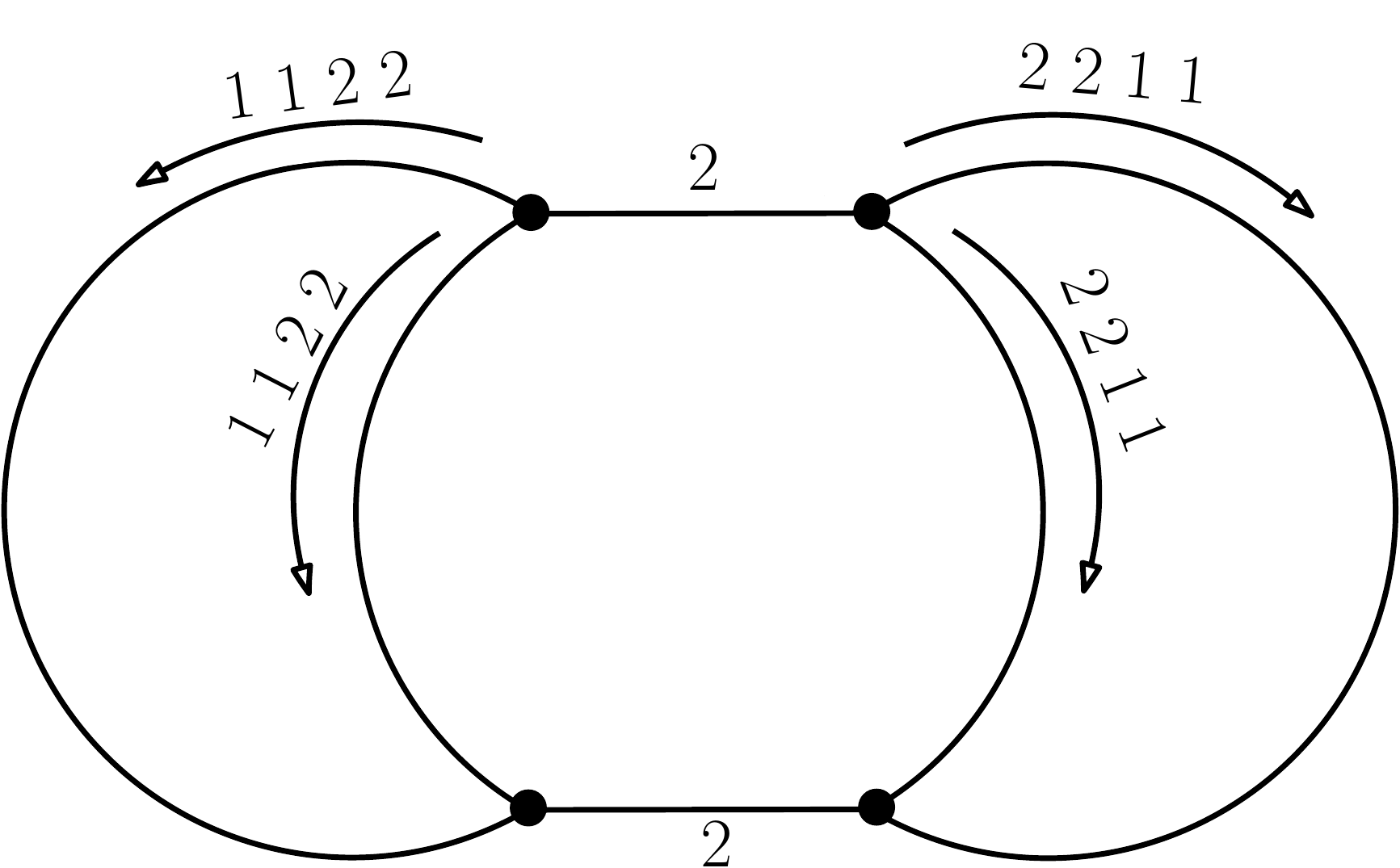}
 \captionof{figure}{\label{2-1-1}}
\end{minipage}
&
\begin{minipage}{4.3cm}
\hspace*{-.14cm}\includegraphics[scale=.26]{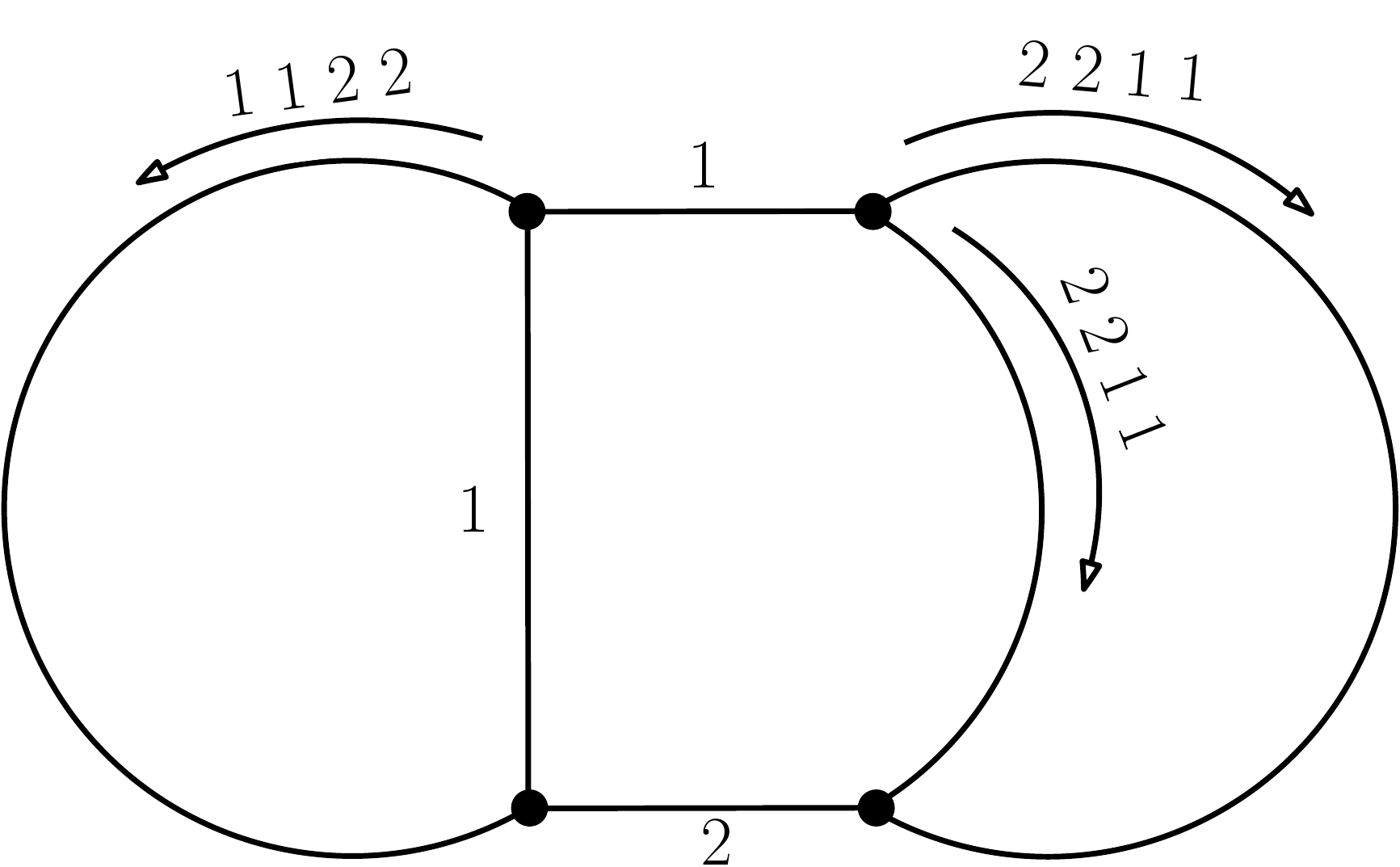}
 \captionof{figure}{\label{3-1}}
\end{minipage}
\\
\hline
\end{tabular}

\vspace*{-.3cm}
\hspace*{2.1cm}\begin{tabular}{|c|c|}
\hline
\begin{minipage}{5.09cm}
\hspace*{-.cm}\includegraphics[scale=.26]{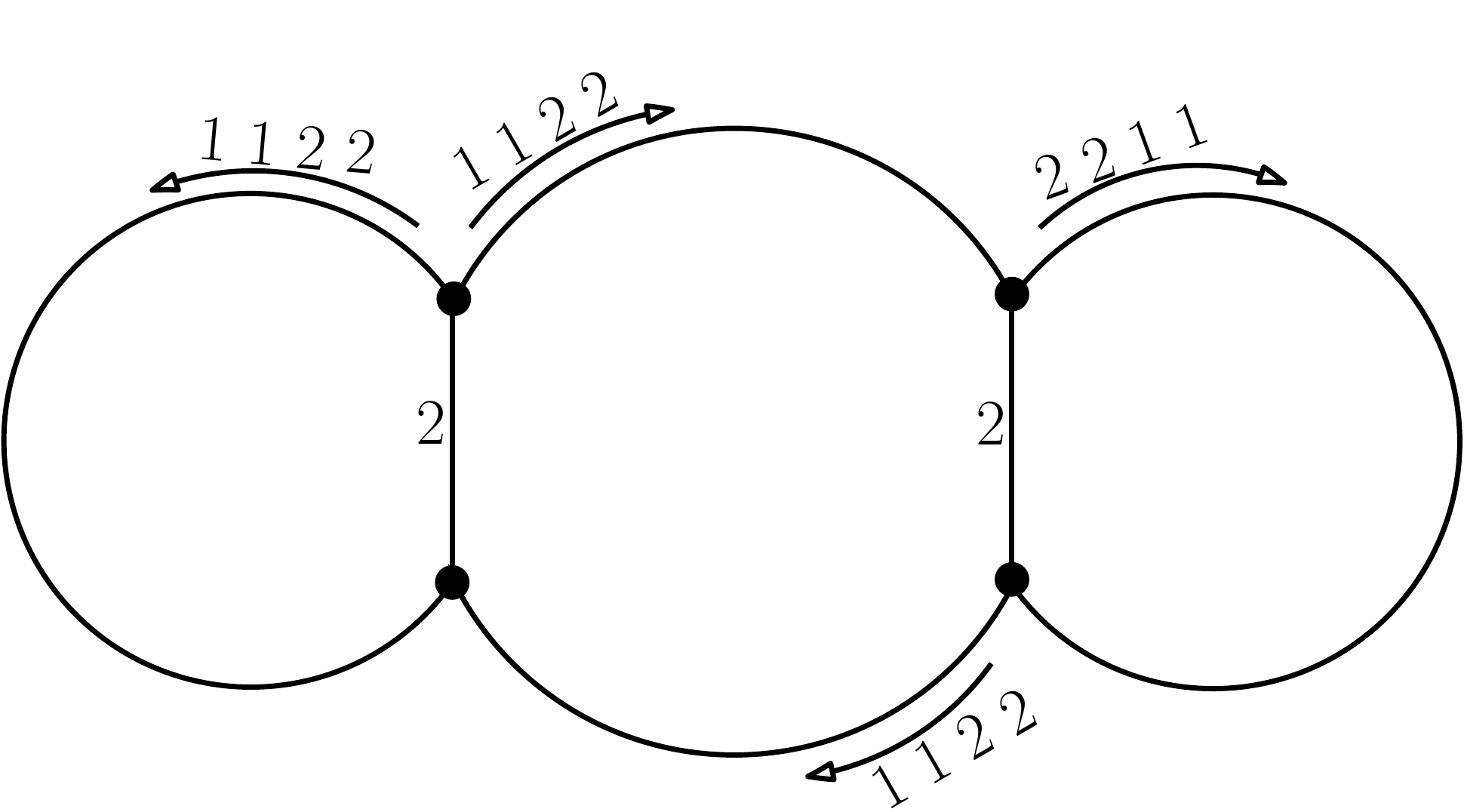}
 \captionof{figure}{\label{1-1-1}}
\end{minipage}
&
\begin{minipage}{5.1cm}
\includegraphics[scale=.26]{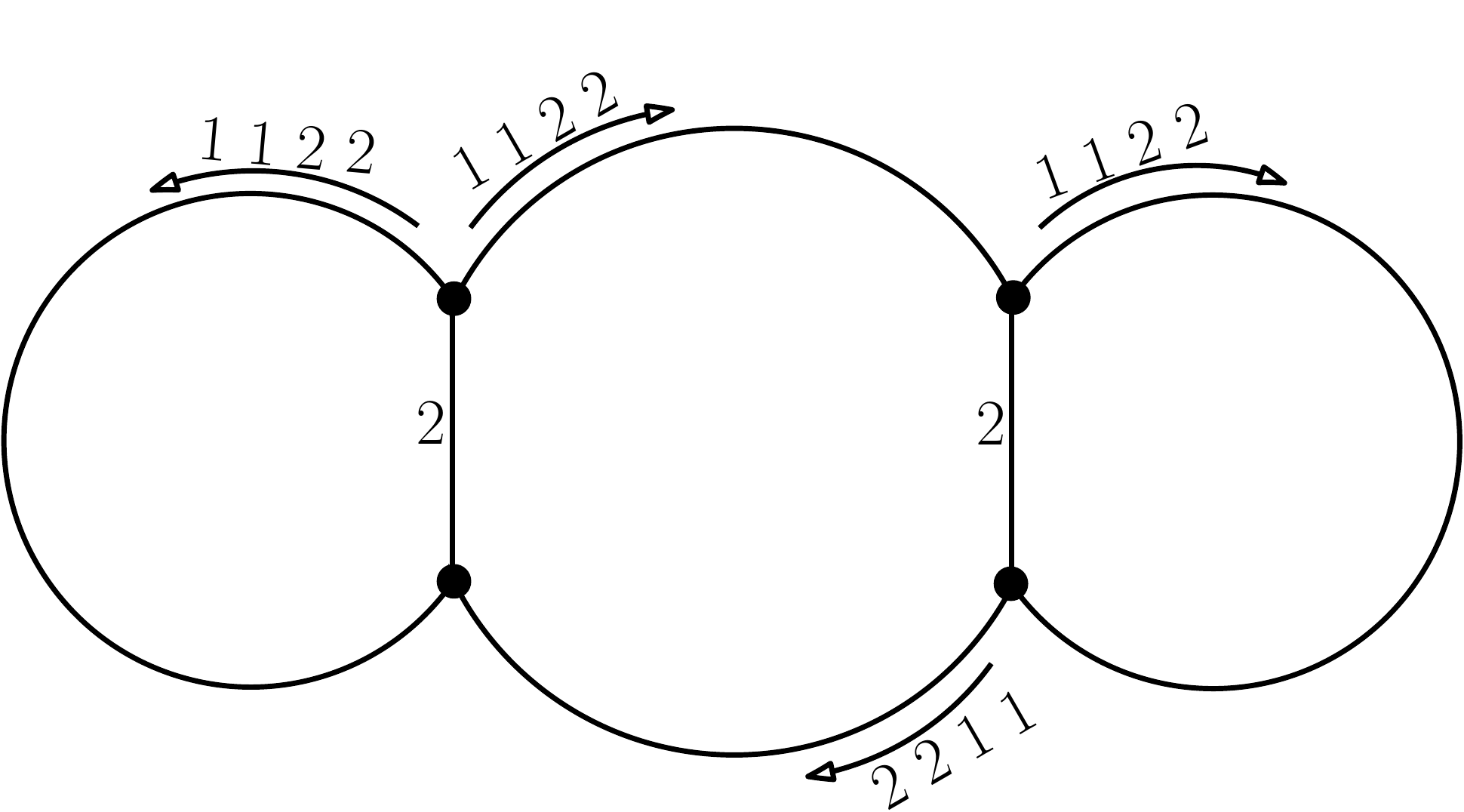}
 \captionof{figure}{\label{1-1-2}}
\end{minipage}
\\
\hline
\end{tabular}
\renewcommand{\tablename}{{Figure}}
\addtocounter{table}{2}
\captionof{table}{A VC$2$-EW for $G$}
}

\end{proof}
\section{\hspace*{-.6cm}. Conclusion}
Let $G$ be a connected graph which $G\not\cong C_{4k+r}$ for $r=1,2,3$, if $\mu(G)>2$, then by Corollary \ref{cor: MSP}, $G$ contains a maximal simple path of length $1$. In this case there are infinite families of graphs either with $\mu(G)\leq2$ (see Proposition \ref{equitable}) and $\mu(G)>2$ (see Theorems \ref{thm:GPTs} and \ref{theta}). 
If we consider the connected graphs contains no edge as a maximal simple path which don't admit a VC$2$-EW, then Corollary \ref{cor: p and p'} guarantee that there exist maximal simple paths $P$ and $P'$ where length of $P$ and length of $P'$ is $1\pmod4$ and $3\pmod4$, respectively. In Theorem \ref{thm: our conj}, we proved that every connected graph $G$ which $G\not\cong C_{4k+r}$ for $r=1,2,3$ and $\mu(G)>2$, contains an edge as a maximal simple path. In the other word, if $G\not\cong C_{4k+r}$ for $r=1,2,3$ is a connected graph contains no edge as a maximal simple path, then $\mu(G)\leq2$.

It was proved that every $3$-colorable graph admits a VC$3$-EW. Thus, for bipartite graphs $\mu(G)\leq3$, and bipartite graphs are divided into two classes, one is bipartite graphs with $\mu(G)\leq2$, say Class $1$, and the second is bipartite graphs with $\mu(G)=3$, say Class $2$. The classification of bipartite graphs in these two classes is not a trivial problem.

It is proved that every $3$-connected bipartite graph is in Class $1$. For a graph contains cut vertices, some properties of it's blocks which guarantee to $G$ be in Class $1$ are given (Theorem \ref{2 Blocks} and Corollary \ref{k Blocks}). Corollary \ref{cor: MSP} concludes that every $2$-connected bipartite graph which has no ear of length $1$ in it's ear decomposition belongs to Class $1$.  In Theorem \ref{thm:GPTs} and Example \ref{exm:mu=3} infinite families of $2$-connected bipartite graphs in Class $2$ are provided. 

\end{document}